\newtheorem{lem}{Lemma}[section]
\newtheorem{thm}{Theorem}[section]
\newcommand{\F}{\mathbb{F}}
\newcommand{\ZZ}{\mathbb{Z}}
\newcommand{\FF}{\mathbb{F}}
\newcommand{\QQ}{\mathbb{Q}}
\DeclareMathOperator{\ord}{ord}
\DeclareMathOperator{\Primes}{Primes}
\DeclareMathOperator{\redl}{red_{\ell}}
\DeclareMathOperator{\redt}{red_{2}}
\title{Torsion primes for elliptic curves over degree 8 number fields}
\author{Maleeha Khawaja}
\address{School of Mathematics and Statistics, Hicks Building, University of Sheffield, Sheffield S3 7RH, United Kingdom}
\email{mkhawaja2@sheffield.ac.uk}
\date{\today}
\keywords{Modular curves, elliptic curves, rational points, abelian varieties\\
Data sharing is not applicable to this article as no datasets were generated or analysed during the current study.}
\subjclass[2020]{11G05, 14G05, 14G25, 14H52}
\begin{document}

\begin{abstract} 
Let $d\geq 1$ be an integer and let $p$ be a rational prime. 
    Recall that $p$ is a torsion prime of degree $d$ if 
    there exists an elliptic curve $E$ over a degree $d$ number field $K$ such that $E$ has a $K$-rational point of order $p$. 
    Derickx, Kamienny, Stein and Stoll \cite{derickx2021torsion} have computed the torsion primes of degrees 4, 5, 6 and 7. We verify that the techniques used in \cite{derickx2021torsion} can be extended to determine the torsion primes of degree 8. 

\end{abstract}

\maketitle

\section{Introduction}

Let $d\geq 1$ be an integer and let $p$ be a rational prime. 
Recall that $p$ is a \textbf{torsion prime of degree $d$} if there is a number field $K$ of degree $d$ and an elliptic curve $E$ over $K$ with a $K$-rational point of order $p$.
Let $S(d)$ denote the set of torsion primes of degree $d$. Let Primes$(x)$ denote the set of primes less or equal than $x$. 
Mazur \cite{Mazur77, Mazur78} was the first to completely determine the set $S(d)$ for any integer $d$. He found that
$S(1)=\Primes(7)$.
Kamienny \cite{Kamienny} and Parent \cite{Pierre000, Parent003} determined the torsion primes of degrees 2 and 3, respectively, finding that $S(2)=S(3)=\Primes(13)$.
Building on the techniques used in these works, Derickx, Kamienny, Stein and Stoll \cite{derickx2021torsion} proved the following result.
\begin{thm}[Derickx, Kamienny, Stein and Stoll]
 \label{DKSS}
For an integer $d\geq 1$, let $S(d)$ be the set of torsion 
primes of degree $d$. Then,
\begin{multline*}
    S(4)=\Primes(17),\; S(5)=\Primes(19),\\
    S(6)=\Primes(19)\cup\{37\}, \text{ and } S(7)=\Primes(23).
\end{multline*}

\end{thm}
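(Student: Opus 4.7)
The plan is to prove the theorem in two parts for each degree $d \in \{4,5,6,7\}$: first establish the lower bound that every prime in the claimed $S(d)$ is a torsion prime, and then establish the upper bound that no other primes occur.

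For the lower bound, I would exhibit, for each listed prime $p$, an elliptic curve $E$ over a number field $K$ of degree at most $d$ with a $K$-rational point of order $p$. The primes $p \leq 7$ are already handled by Mazur's theorem over $\mathbb{Q}$. For the remaining primes (including $11, 13, 17, 19, 23$ and the exceptional $37 \in S(6)$), one searches the universal family over $X_1(p)$, or appeals to existing tables such as the LMFDB, and verifies the order of the torsion point via a computer algebra system. Base change then yields examples in every degree up to $d$.

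The upper bound is the substantive direction, and the strategy I would follow is the Mazur--Kamienny formal immersion method, as refined by Parent. A $K$-rational point of order $p$ on an elliptic curve gives rise to a point in $Y_1(p)(K)$, whose Galois orbit corresponds to an effective degree $d$ divisor on $X_1(p)$, i.e.\ a $\mathbb{Q}$-point on the symmetric power $X_1(p)^{(d)}$. Composing with the Abel--Jacobi map $D \mapsto [D - d \cdot \infty]$ and projecting onto the winding quotient $J_e(p)$ (which by Kolyvagin--Logachev has finite Mordell--Weil group) gives a morphism $f \colon X_1(p)^{(d)} \to J_e(p)$. If $f$ is a formal immersion at the cuspidal divisor $d \cdot \infty$ after base change to some auxiliary prime $\ell$ of good reduction, then every alleged degree $d$ torsion point must reduce to the cusp modulo $\ell$, and a direct analysis of the fibre rules this out.

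Verifying the formal immersion condition reduces to a concrete Hecke algebra computation: one must show that the images of $T_1, T_2, \ldots, T_d$ in the cotangent space of $J_e(p)$ at the cusp are linearly independent, which one checks using modular symbols of weight $2$ and level $\Gamma_1(p)$. The main obstacle is that for certain sporadic small $p$ this criterion fails, and one must refine the argument --- replacing the winding quotient by an Eisenstein or Atkin--Lehner quotient, or enumerating low-degree points on $X_1(p)$ by hand. The exceptional $37 \in S(6)$ is the most famous such instance, arising from a genuine sporadic degree $6$ point on $X_1(37)$, and requires ad hoc treatment. Organising the case analysis so that it runs uniformly across $d=4,5,6,7$, while isolating and separately verifying the finitely many sporadic exceptions, is the technical heart of the argument.
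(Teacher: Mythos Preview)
This theorem is not proved in the paper at all: it is quoted verbatim from \cite{derickx2021torsion} as background, and the paper's own contribution is the $d=8$ case (Theorem~\ref{thm:S8}). So there is no proof in the present paper to compare your proposal against.

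That said, since the paper reproduces the machinery of \cite{derickx2021torsion} in order to extend it to $d=8$, one can compare your outline to that machinery. Your high-level strategy is correct --- lower bound by exhibiting points, upper bound by a Mazur--Kamienny--Parent formal immersion argument bounded above by Oesterl\'e. But the actual implementation differs from your sketch in several ways. First, the argument is organised around the two conditions (a) and (b) of Lemma~\ref{MainStrategy}: (a) is the formal immersion statement, while (b) is a separate reduction-mod-$l$ sieve showing that non-cuspidal residue classes contain no rational points; your proposal conflates these. Second, the target abelian variety is not the winding quotient $J_e(p)$ but rather any morphism $t\colon J_1(p)\to\mathcal{A}$ with $t(J_1(p)(\QQ))$ finite, chosen computationally prime by prime; the flexibility here is essential. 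Third, the formal immersion is checked not just at $d\cdot\infty$ but at \emph{every} sum of images of rational cusps in $X_1(p)^{(d)}(\mathbb{F}_l)$, and when this fails for $X_1(p)$ one passes to an intermediate curve $X_H$ between $X_1(p)$ and $X_0(p)$. Your ``linear independence of $T_1,\dots,T_d$'' is the original Kamienny criterion, which is strictly weaker and would not suffice for all primes in range.
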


We use the techniques and computations of the aforementioned paper to determine the set of torsion primes of degree 8.

\begin{thm}
    \label{thm:S8}
For an integer $d\geq 1$, let $S(d)$ be the set defined above. Then,
\[
S(8)=\Primes(23).
\]
\end{thm}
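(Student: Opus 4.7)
The strategy is to adapt, for $d=8$, the framework developed in \cite{derickx2021torsion} for $d\in\{4,5,6,7\}$. The statement splits into two directions: that every prime $p\le 23$ lies in $S(8)$, and that no prime $p>23$ does.

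First, for the inclusion $\Primes(23)\subseteq S(8)$: one checks immediately that $S(d)\subseteq S(d')$ whenever $d\mid d'$, by extending the base field of $E/K$ along any auxiliary extension of degree $d'/d$. Hence Theorem \ref{DKSS} combined with Mazur's $S(1)=\Primes(7)$ and Kamienny's $S(2)=\Primes(13)$ already yield $\Primes(17)=S(4)\subseteq S(8)$. The remaining primes $19$ and $23$ require producing non-cuspidal degree-$8$ points on $X_1(19)$ and $X_1(23)$ respectively; I would do this either by exhibiting explicit elliptic curves over degree-$8$ number fields with the required torsion (for instance by searching tables such as the LMFDB, or by base-changing known low-degree points through an auxiliary extension of appropriate degree), or by pulling back a $\QQ$-rational point along a low-degree map $X_1(p)\to\mathbb{P}^1_\QQ$ guaranteed by the $\QQ$-gonality bounds of van Hoeij and Derickx.

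The bulk of the proof then lies in the non-existence statement: that no prime $p>23$ can belong to $S(8)$. I would follow the blueprint of \cite{derickx2021torsion} verbatim, substituting $d=8$ throughout. Oesterl\'e's bound, refined via Kamienny--Mazur, reduces the problem to a finite list of primes. For each such $p$ I would choose an auxiliary prime $\ell\ne p$ of good reduction for $X_1(p)$ and a rank-zero quotient $A$ of $J_1(p)$---typically the winding quotient, whose rank vanishes by Kolyvagin--Logachev and Kato---and then verify that the composite
\[
\phi\colon X_1(p)^{(8)} \longrightarrow J_1(p) \longrightarrow A, \qquad D\longmapsto [D-8\cdot\infty],
\]
is a formal immersion at $8\cdot\infty$ modulo $\ell$. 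Concretely, this reduces to the non-vanishing modulo $\ell$ of an explicit $8\times 8$ determinant built from the Hecke eigenvalues of $A$ on suitable generators. Once this holds, any hypothetical non-cuspidal degree-$8$ point on $X_1(p)$ would force a non-trivial relation in $A(\overline{\QQ})_{\mathrm{tors}}$ incompatible with the known description of the cuspidal subgroup of $J_1(p)$, yielding the desired contradiction.

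The expected obstacle, compared with the $d\le 7$ cases already handled in \cite{derickx2021torsion}, is almost entirely computational: the $8\times 8$ Hecke-determinant criterion is substantially heavier, the list of primes $p>23$ to verify is longer, and there will almost certainly be a small set of exceptional primes (the analogue of $p=37$ for $d=6$) at which the na\"ive formal immersion at $\infty$ modulo the first convenient $\ell$ fails. For those primes one must refine the argument by varying $\ell$, by moving to a different cusp, or by deploying the higher-order formal immersion criteria developed in \cite{derickx2021torsion}, and it is here that essentially the only genuinely new work of the paper will reside.
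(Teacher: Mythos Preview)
Your plan---Oesterl\'e's bound, then a formal-immersion argument on $X_1(p)^{(8)}\to A$ with $A$ a rank-zero quotient of $J_1(p)$---is exactly the paper's framework. But your outline conflates two structurally different verifications that the paper (following \cite{derickx2021torsion}) keeps separate as assumptions~(a) and~(b) of Lemma~\ref{MainStrategy}. Formal immersion only handles rational points of $X_1(p)^{(8)}$ whose reduction $\bar x\in X_1(p)^{(8)}(\mathds{F}_{l})$ is a sum of images of rational cusps; it says nothing when $\bar x$ is \emph{not} of this form. Non-cuspidal reductions occur precisely when some elliptic curve over $\mathds{F}_{l^{d'}}$ with $d'\le 8$ carries $p$-torsion, or when $p\mid l^{d'}\pm 1$. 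With $l=2$ this leaves (after the rank-zero primes $29,31,41,47,59,71$ are treated separately) the residual set $\{43,61,67,73,113,127,131,137,139,241,257\}$, and for each of these one must also verify assumption~(b): that no non-cuspidal $\bar x$ lifts to $\QQ$. The paper does this via a Hecke/principal-divisor criterion (Lemma~\ref{lem:principal}) for $p\le 73$ and via a comparison of the positive-rank simple factors of $J_1(p)$ with those of $J_0(p)$ for $p\ge 113$. Your proposed refinements---varying $l$, moving to a different cusp, higher-order formal immersion---are all remedies for failures of~(a); none of them addresses~(b), which needs genuinely different input.

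Two smaller points. First, $X_1(p)$ has $(p-1)/2$ rational cusps, not one, so formal immersion must be checked at every degree-$8$ sum of their reductions, not only at $8\cdot\infty$. Second, for several primes in the range the paper verifies~(a) not on $X_1(p)$ itself but on an intermediate curve $X_H$ between $X_1(p)$ and $X_0(p)$.
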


Although the study of low degree points on curves has received much attention in and of itself, the precise determination of the set of torsion primes of low degree has also had several applications to the explicit resolution of Diophantine equations; see e.g. \cite{Anni16}, \cite{Freitas14}, \cite{khawaja2022fermats}. 
We expect Theorem \ref{thm:S8} to have Diophantine 
applications in a similar manner.

\medskip

The purpose of this note is to provide a proof of Theorem \ref{thm:S8}. We stress that none of the ideas or techniques used in this note are due to us and we are merely verifying that the techniques used to prove Theorem \ref{DKSS} can be extended to prove Theorem \ref{thm:S8}.

\medskip

All computations were performed in \texttt{Magma} \cite{Magma} using Stoll's code \cite{StollCode}. 
All supporting computations can be found at
\begin{center}
    \url{https://github.com/MaleehaKhawaja/deg8torsionprimes}.
\end{center}

\medskip

After completing our computations we learnt that Maarten Derickx and Michael Stoll have independently determined the torsion primes of degree 8 in unpublished work, as well as computing smaller bounds for the sets $S(9)$ and $S(10)$.

\medskip

The following two results form the theoretical basis of the proof. For the benefit of the reader, we include the proofs of these results here.

\medskip

Let $d\geq 1$ be an integer over $\QQ$. 
Let $X$ be a curve over $\QQ$, and let $X^{(d)}$ denote the d$^{th}$ symmetric 
power of $X$. 
Recall that points in $X^{(d)}(\QQ)$ 
correspond to effective degree $d$ divisors 
on $X$. 
Let $N\geq 1$ be an integer and suppose 
$X=X_{1}(N)$. 
Then $C_{1}(N)$ denotes the set of cusps on $X_{1}(N)$.

\begin{lem}[Derickx, Kamienny, Stein and Stoll]\label{lem1.5}
Let $d\geq 1$ be an integer, and let $p$ be a prime. Let $\alpha$ be defined by the following composition of maps
\begin{equation}
    \alpha: C_{1}(p)(\QQ)^{d}\rightarrow X_{1}(p)(\QQ)^{d}\rightarrow X_{1}(p)^{(d)}(\QQ).
\end{equation}
If $\alpha$ is surjective then $p\notin S(d)$.
\end{lem}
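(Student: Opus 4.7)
My plan is to proceed by contradiction: assume $\alpha$ is surjective and $p\in S(d)$, then exhibit a rational point of $X_{1}(p)^{(d)}$ that cannot lie in the image of $\alpha$. Suppose $p\in S(d)$, so there exists an elliptic curve $E$ over a number field $K$ with $[K:\QQ]=d$ carrying a $K$-rational point $P$ of order $p$. The pair $(E,P)$ gives a non-cuspidal $K$-rational point of the (coarse) moduli space $X_{1}(p)$, which in turn corresponds to a closed point $x$ of the $\QQ$-scheme $X_{1}(p)$ whose residue field $\kappa(x)$ embeds into $K$. Hence $d'\colonequals\deg(x)=[\kappa(x):\QQ]$ divides $d$, and $x$ is non-cuspidal.

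Next I would promote $x$ to an element of $X_{1}(p)^{(d)}(\QQ)$. Since $X_{1}(p)$ has a $\QQ$-rational cusp (for instance the cusp $\infty$), I can form the effective divisor
\[
D \;=\; x \,+\, (d-d')\,[\infty],
\]
which has degree $d$ and is defined over $\QQ$. Interpreting $X_{1}(p)^{(d)}(\QQ)$ as the set of $\QQ$-rational effective divisors of degree $d$ on $X_{1}(p)$, the divisor $D$ represents an element of $X_{1}(p)^{(d)}(\QQ)$.

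Finally I would invoke the hypothesis that $\alpha$ is surjective. Then $D$ lies in the image of $\alpha$, so $D=[c_{1}]+\cdots+[c_{d}]$ for some $\QQ$-rational cusps $c_{i}\in C_{1}(p)(\QQ)$. Comparing the two expressions for $D$ as effective divisors, the closed point $x$ must appear among the $c_{i}$; but $x$ is non-cuspidal, a contradiction. Hence no such $E$ exists, i.e.\ $p\notin S(d)$.

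The argument is essentially formal once the moduli-theoretic translations are in place. The only genuinely delicate step is the passage from the $K$-rational point $(E,P)$ of $X_{1}(p)$ to a closed point of degree dividing $d$ on $X_{1}(p)_{\QQ}$, together with the identification of $X_{1}(p)^{(d)}(\QQ)$ with $\QQ$-rational effective divisors of degree $d$; both are standard for smooth projective curves, and the existence of the rational cusp $\infty$ on $X_{1}(p)$ ensures that we can always pad $x$ up to an honest degree-$d$ divisor. I do not anticipate any serious technical obstacle.
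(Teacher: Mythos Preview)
Your argument is correct and follows essentially the same contrapositive strategy as the paper: from a putative $(E,P)$ over a degree-$d$ field $K$ you build a $\QQ$-rational effective degree-$d$ divisor on $X_{1}(p)$ whose support contains a non-cuspidal point, contradicting surjectivity of $\alpha$. The only cosmetic difference is in how the degree-$d$ divisor is produced when the underlying closed point $x$ has degree $d'<d$: the paper takes the sum of the $d$ Galois conjugates of the $K$-point (equivalently the divisor $(d/d')\,x$), whereas you pad $x$ with $(d-d')$ copies of the rational cusp $\infty$. Either construction yields a $\QQ$-rational effective divisor of degree $d$ with $x$ in its support, so both conclude in the same way.
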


\begin{proof} 
This is \cite[Lemma 1.5]{derickx2021torsion}.
Suppose, for a contradiction that $p\in S(d)$.
Thus there is a non--cuspidal $K$--rational point on $X_{1}(p)$.
Thus there is a pair $(E,P)$ where $E$ is an elliptic curve over $K$ and $P$ is a point of order $p$ on $K$. Taking the sum of the Galois conjugates of $P$ gives a rational effective degree $d$ divisor on $X_{1}(p)$. 
This divisor isn't the sum of rational cusps - this contradicts the surjectivity of $\alpha$.
\end{proof}
For a prime $\ell$ distinct from $p$, let $\redl$ denote the usual reduction map:
\begin{equation*}
    \redl:X_{1}(p)^{(d)}(\QQ)\rightarrow X_{1}(p)^{(d)}(\mathds{F}_{\ell}).
\end{equation*}

\begin{lem}[Derickx, Kamienny, Stein and Stoll]\label{MainStrategy}
Let $\ell$ be a prime distinct from $p$. Let $\bar{x}\in X_{1}(p)^{(d)}(\mathds{F}_{\ell})$. Suppose that the following two assumptions are satisfied.
\begin{enumerate}
    \item[(a)] If $\bar{x}$ is the sum of images of rational cusps under $\redl$ then the residue class of $\bar{x}$ contains at most one rational point.
    \item[(b)] If $\bar{x}$ is not the sum of images of rational cusps under $\redl$ then the residue class of $\bar{x}$ doesn't contain a rational point.
\end{enumerate}
Then $p\not\in S(d)$.
\end{lem}

\begin{proof}
This is \cite[Lemma 1.7]{derickx2021torsion}. 
We want to show that the map $\alpha$, as defined in Lemma \ref{lem1.5}, is surjective. Let $x\in X_{1}(p)^{(d)}(\QQ)$ be a rational point in the residue class of $\bar{x}$. By taking the contrapositive of assumption $(b)$, $\bar{x}$ is the sum of images of rational cusps under $\redl$. That is 
\begin{equation*}
    \bar{x}=\bar{x}_{1}+...+\bar{x}_{d}
\end{equation*}
where $\bar{x}_{i}=\redl({x}_{i})$ for rational cusps $x_{i}\in X_{1}(p)^{(d)}(\QQ)$. 
Let 
\begin{equation*}
    x'\coloneqq x_{1}+...+x_{d}\in X_{1}(p)^{(d)}(\QQ). 
\end{equation*}

Since $x'$ is a rational point in the residue class of $\bar{x}$, 
it immediately follows from assumption (a) that $x=x'$. 
It follows that $x\in C_{1}(p)(\QQ)^{d}$ since $x=x'$ is the sum of rational cusps. 
By Lemma \ref{lem1.5} we have $p\not\in S(d)$.

\end{proof}

Since $\Primes(23)\subseteq S(8)$ (see \cite[Proposition 1.3]{derickx2021torsion}), in order to prove Theorem \ref{thm:S8}, it remains to prove that the reverse inclusion holds. The smallest general bound for torsion primes of degree $d$ is due to Oesterl\'{e}: 
\begin{equation*}
    S(d)\subset \Primes((3^{d/2}+1)^2);
\end{equation*}
see \cite[Section 6]{derickx2021torsion} for a proof.
In particular we need to verify that both assumptions of Lemma \ref{MainStrategy} hold for $29\leq p < 6724$. We say $p$ is a \textbf{rank zero prime} if the Jacobian 
$J_{1}(p)$ of the modular curve $X_{1}(p)$ has rank $0$ 
over $\QQ$. 
In Section \ref{sec:rankzero}, we verify that 
Lemma \ref{MainStrategy} holds for all rank zero primes. 
In Sections \ref{sec:assumptiona} and 
\ref{sec:assumptionb}, we verify that assumptions (a) and (b) of Lemma \ref{MainStrategy} hold for all remaining primes. 

\section*{acknowledgements}
The author is sincerely grateful to Frazer Jarvis and Michael Stoll for helpful correspondence, and would like 
to thank the anonymous referees for their invaluable 
feedback
and careful reading of a previous version of the paper. 
The author is supported by an EPSRC studentship from the University of Sheffield
(EP/T517835/1). 

\section{Rank zero primes}
\label{sec:rankzero}
We begin our verification at the primes $p$ for which the Jacobian $J_{1}(p)$ of $X_{1}(p)$ has rank 0 over $\QQ$. If $p$ is such a prime then we refer to $p$ as a \textbf{rank zero prime}. 
There are finitely many rank zero primes, 
and moreover $p$ is a rank zero prime if and only if
\begin{equation*}
    p\leq 31 \qquad \text{or}\qquad p\in\{41, 47, 59, 71\};
\end{equation*}
see \cite[Proposition 6.2.1]{Conrad03}.

\medskip

Let $X$ be a curve defined over $\QQ$.
Let $d\geq 1$ be an integer. 
Fix $x_{0}\in X^{(d)}(\QQ)$. 
Let $J$ be the Jacobian of $X$.
Recall that the Abel--Jacobi map $\iota$ is given 
by 
\[
X^{(d)}\rightarrow J,\quad x\mapsto [x-x_{0}].
\]
Recall that the $\QQ$-\textbf{gonality} of $X$ is 
the minimum degree of a map from $X$ to $\mathds{P}^{1}$ defined over $\QQ$. 
We break the proof of \cite[Corollary 3.5]{derickx2021torsion} into smaller parts.

\begin{lem}
    \label{lem:goninj}
    Let $d\geq 1$ be an integer. 
    Suppose $p$ is a prime such the $\QQ$-gonality of $X_{1}(p)$ is strictly greater than $d$. Then the map
    \begin{equation*}
        \iota: X_{1}(p)^{(d)}(\QQ)\rightarrow J_{1}(p)(\QQ)
    \end{equation*}
    is injective.
\end{lem}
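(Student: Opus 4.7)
The plan is to exploit the standard characterization: two effective divisors of degree $d$ have the same image under the Abel--Jacobi map if and only if they are linearly equivalent, and linear equivalence of distinct effective divisors produces a low-degree map to $\mathbb{P}^1$.

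First I would assume for contradiction that $D, D' \in X_1(p)^{(d)}(\QQ)$ are distinct with $\iota(D) = \iota(D')$. Unwinding the definition of $\iota$ (up to a choice of base class, which factors out), this gives $D - D' = (f)$ for some non-constant rational function $f$ on $X_1(p)$. Since $D$ and $D'$ are both $\QQ$-rational effective divisors, the linear system $|D'|$ is defined over $\QQ$; in particular the $\QQ$-vector space $H^0(X_1(p), \mathcal{O}(D'))$ captures all effective divisors linearly equivalent to $D'$ that are defined over $\QQ$, and so $f$ can be taken in $\QQ(X_1(p))^*$ (unique up to a $\QQ^*$-scalar).

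Next I would pass from $f$ to a morphism $\varphi_f \colon X_1(p) \to \mathbb{P}^1$ defined over $\QQ$. Its degree equals the degree of the polar divisor of $f$; since $(f)_\infty \leq D'$ and $\deg D' = d$, we have $\deg \varphi_f \leq d$. Combined with $f$ being non-constant (because $D \neq D'$), this exhibits a rational map $X_1(p) \to \mathbb{P}^1$ of degree between $1$ and $d$, contradicting the hypothesis that the gonality of $X_1(p)$ is strictly greater than $d$. Hence $D = D'$ and $\iota$ is injective.

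There is no serious obstacle here; the only mildly subtle point is ensuring that $f$ can be chosen over $\QQ$ rather than only over $\overline{\QQ}$, which is handled by the rationality of $D$ and $D'$ as in the previous paragraph. Everything else is a direct application of the definitions of gonality and of the Abel--Jacobi map on the symmetric product.
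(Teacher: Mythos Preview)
Your argument is correct and is essentially the same as the paper's: both show that $\iota(D)=\iota(D')$ forces $D-D'=(f)$, and then use that a non-constant $f$ with $(f)_\infty\le D'$ has degree at most $d$, contradicting the gonality hypothesis. Your write-up is in fact more careful than the paper's, particularly in justifying that $f$ may be taken in $\QQ(X_1(p))$.
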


\begin{proof}
    Suppose there exist $x_{1}, x_{2}\in X_{1}(p)^{(d)}(\QQ)$ such that $\iota(x_{1})=\iota(x_{2})$. 
    Then
    \begin{equation*}
        x_{1}-x_{2}=D_{1}-D_{2}+(f),
        \qquad D_{1}, D_{2}\in J_{1}(p)(\QQ),\;
        f\in L(D_{1}-D_{2}).
    \end{equation*}
    In particular, the degree of $f\in\QQ(X_{1}(p))$ is less than or equal to $d$. 
    This contradicts the assumption on the $\QQ$-gonality of $X_{1}(p)$. 
\end{proof}

 \begin{lem}[Derickx, Kamienny, Stein and Stoll]
        \label{lem:rankzero8a}
 Let $d\geq 1$ be an integer. Suppose $p\geq 3$ is a rank zero prime such that the $\QQ$-gonality of $X_{1}(p)$ is strictly greater than $d$. 
 Then assumption (a) of Lemma \ref{MainStrategy} is satisfied for $p$ with $\ell=2$.
 \end{lem}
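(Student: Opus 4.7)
The plan is to pass to the Jacobian and exploit both the gonality hypothesis and the rank zero assumption, with reduction at the auxiliary prime $l=2$. Suppose $x_{1}, x_{2} \in X_{1}(p)^{(d)}(\QQ)$ both lie in the residue class of $\bar{x}$; our task is to show $x_{1} = x_{2}$. Because the gonality of $X_{1}(p)$ exceeds $d$, Lemma \ref{lem:goninj} tells us that $\iota$ is injective, so it suffices to prove $\iota(x_{1}) = \iota(x_{2})$ in $J_{1}(p)(\QQ)$.

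Set $D := \iota(x_{1}) - \iota(x_{2})$. Since $x_{1}$ and $x_{2}$ share the same reduction modulo $2$, the class $D$ reduces to zero in $J_{1}(p)(\mathds{F}_{2})$, so $D$ lies in the kernel of the reduction map on $J_{1}(p)(\QQ)$. Because $p$ is odd, $J_{1}(p)$ has good reduction at $2$, and the kernel of reduction on $\QQ_{2}$-points of the N\'{e}ron model is the formal group, which is pro-$2$. Combining this with the rank zero hypothesis (which makes $J_{1}(p)(\QQ)$ a finite torsion group), $D$ must be a $2$-power torsion element of $J_{1}(p)(\QQ)$.

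The final step, which I expect to be the main obstacle, is to upgrade ``$D$ is $2$-power torsion'' to ``$D = 0$''. The strategy is to invoke that the reduction map $J_{1}(p)(\QQ) \to J_{1}(p)(\mathds{F}_{2})$ is injective on its $2$-primary part. Since the odd rank zero primes form the short explicit list $p \in \{11, 13, 17, 19, 23, 29, 31, 41, 47, 59, 71\}$, this can be verified computationally using the known structure of $J_{1}(p)(\QQ)_{\textup{tors}}$ drawn from work of Conrad--Edixhoven--Stein and others. Once this injectivity is in hand, one concludes $D = 0$ and therefore $x_{1} = x_{2}$, so the residue class of $\bar{x}$ contains at most one rational lift, establishing assumption (a) of Lemma \ref{MainStrategy}.
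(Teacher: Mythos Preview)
Your argument is correct and follows the same route as the paper: both reduce assumption (a) to the injectivity of $red_{2}$ on $X_{1}(p)^{(d)}(\QQ)$, use the gonality hypothesis (Lemma~\ref{lem:goninj}) to pass to $J_{1}(p)(\QQ)$, and then invoke injectivity of $red_{2}$ on $J_{1}(p)(\QQ)$. The only difference is packaging. The paper simply cites \cite[Proposition~3.4]{derickx2021torsion} for this last injectivity, whereas you sketch a proof of it via the formal group at $2$ plus a residual computational check on the $2$-primary part. Your sketch is sound; in fact the cited proposition shows that $\#J_{1}(p)(\QQ)$ is odd for every odd rank zero prime $p$, so the $2$-primary part is trivial and your ``final step'' collapses---no case-by-case verification is needed once you know this.
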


 \begin{proof}
     This is \cite[Corollary 3.5]{derickx2021torsion}.
     We recall that we need to show that the reduction map 
     $\redt: X_{1}(p)^{(d)}(\QQ)\rightarrow X_{1}(p)^{(d)}(\mathds{F}_{2})$ is injective.
     It follows from Lemma \ref{lem:goninj} and \cite[Proposition 3.4]{derickx2021torsion} that the map $\redt \circ\; \iota=\iota \circ \redt$ is injective. Thus $\redt$ is injective.
 \end{proof}

Let $p\in\{29, 31, 41, 47, 59, 71\}$. 
Then it follows from work of Derickx and van Hoeij \cite{Derickx14} that the $\QQ$-gonality of $X_{1}(p)$ is strictly greater than $8$. Thus assumption (a) of Lemma \ref{MainStrategy} is satisfied with $\ell=2$ by Lemma \ref{lem:rankzero8a}.
We now verify that assumption (b) holds for these primes. One way to do this is to show that every $\bar{x}\in X_{1}(p)^{(d)}(\mathds{F}_{\ell})$ is the sum of images of rational cusps.

\begin{lem}
    \label{lem:29notinS8}
    Let $p=29$, $31$ or $41$.
    Then $p\not\in S(8)$.
\end{lem}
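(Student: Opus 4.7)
The plan is to apply Lemma \ref{MainStrategy} with $p=29$, $d=8$ and $l=2$. Since $29$ is an odd rank zero prime and the gonality of $X_{1}(29)$ exceeds $8$ (as already noted in the discussion following Lemma \ref{lem:rankzero8a}), that lemma immediately supplies assumption (a) in this setting. It therefore only remains to verify assumption (b) for every $\bar{x}\in X_{1}(29)^{(8)}(\mathds{F}_{2})$.

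Following the route suggested in the paragraph preceding the statement, I would verify (b) in the following stronger form: every $\bar{x}\in X_{1}(29)^{(8)}(\mathds{F}_{2})$ is itself the image under $red_{2}$ of a sum of rational cusps. Once this is established, the hypothesis of (b) becomes vacuous, so both assumptions of Lemma \ref{MainStrategy} hold and $29\notin S(8)$ follows at once.

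The verification is finite and explicit. First, I would fix a model of $X_{1}(29)$ in \texttt{Magma} and reduce it modulo $2$, which is legitimate since $l=2\neq 29=p$. Next, I would enumerate the points of $X_{1}(29)(\mathds{F}_{2^{k}})$ for $k=1,\dots,8$ and from these assemble the complete list of points of $X_{1}(29)^{(8)}(\mathds{F}_{2})$, that is, the effective degree $8$ divisors on $X_{1}(29)_{\mathds{F}_{2}}$ stable under $\mathrm{Gal}(\overline{\mathds{F}_{2}}/\mathds{F}_{2})$. In parallel, I would identify the rational cusps in $C_{1}(29)(\QQ)$, compute their images under $red_{2}$, and tabulate the set of all size-$8$ multisets drawn from these reduced cusps. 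A direct set-theoretic comparison of the two lists completes the argument.

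The principal obstacle is computational rather than theoretical: the set of effective $\mathds{F}_{2}$-rational degree $8$ divisors grows rapidly with the sizes of $X_{1}(29)(\mathds{F}_{2^{k}})$, so the enumeration and comparison must be organised with some care to remain tractable. I would rely on Stoll's publicly available \texttt{Magma} code \cite{StollCode}, used essentially as in \cite{derickx2021torsion}; for the small prime $p=29$ this is much cheaper than the formal immersion computations for the larger, rank-positive primes highlighted in the quoted passage in the introduction, and I expect it to terminate in a matter of minutes rather than the hundreds of hours reported there.
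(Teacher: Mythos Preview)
Your approach has a genuine gap: the stronger form you propose to verify is false for $p=29$, $d=8$, $l=2$. There exist effective $\mathds{F}_{2}$-rational divisors of degree $8$ on $X_{1}(29)$ that are not sums of reductions of rational cusps. Indeed, there is an ordinary elliptic curve over $\mathds{F}_{256}$ with trace of Frobenius $25$, hence with $232=8\cdot 29$ rational points, yielding a non-cuspidal point of $X_{1}(29)(\mathds{F}_{256})$; its Frobenius orbit is a closed point of degree dividing $8$, and padding with rational cusps if necessary produces such an $\bar{x}$. This is exactly why $29$ appears in the list of exceptional primes at the start of the section on assumption~(b). The sentence you followed (``one way to do this \dots'') is a general remark; it does not apply to $p=29$, which is why the paper immediately switches to a different method. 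Your enumeration would run, but the final set comparison would fail and the argument would stall there.

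The paper instead exploits the rank-zero structure of $J_{1}(29)$. By \cite[Theorem~3.2]{derickx2021torsion}, $J_{1}(29)(\QQ)$ is generated by differences of rational cusps, so any $\bar{x}$ lying under a rational point of $X_{1}(29)^{(8)}(\QQ)$ must map via $\iota$ into the subgroup of $J_{1}(29)(\mathds{F}_{2})$ generated by differences of the reduced rational cusps. One then checks computationally, for each $\bar{x}$ that is \emph{not} a sum of reduced rational cusps, that $\iota(\bar{x})$ lies outside this subgroup. This Jacobian criterion is strictly finer than a set-theoretic comparison of divisors and is what actually succeeds.
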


\begin{proof}
    From the remarks above, it remains to demonstrate that assumption (b) of Lemma \ref{MainStrategy} is satisfied. We follow the proof of \cite[Lemma 3.7]{derickx2021torsion}. 
    
    Write $X=X_{1}(p)$ and $J=J_{1}(p)$.
    By \cite[Corollary 3.3]{derickx2021torsion}, $J(\QQ)$ is generated by the differences of rational cusps. Thus, if there is a rational point in the residue class of $\bar{x}\in X^{(8)}(\mathds{F}_{2})$ then $\bar{x}$ maps into the subgroup of $J(\mathds{F}_{2})$ that is generated by the differences of the images of the rational cusps.
    We use Stoll's code \cite{StollCode} to verify that, under the hypothesis of assumption (b), $\bar{x}$ doesn't map into this subgroup. 
    The supporting computations can be found in 
    the script \texttt{X129\_31\_41.m}.
\end{proof}


To verify that assumption (b) holds for the primes $47$, $59$, and $71$, we shall need the following lemma.

\begin{lem}[Derickx]
    \label{lem:principal}
Let $d\geq 1$ be an integer, and let $p\geq 3$ be a prime. Suppose $t\in\mathds{T}$ kills the rational points on the Jacobian of $X_{1}(p)$ i.e. 
\begin{equation*}
    t(J_{1}(p)(\QQ))=\{0\},
\end{equation*}
where $\mathds{T}$ denotes the endomorphism ring of $J_{1}(p)$.
Consider two points $\bar{x}_{0},\bar{x}\in X_{1}(p)^{(d)}(\mathds{F}_{2})$ where $\bar{x}_{0}$ is a sum of images of rational cusps. If the divisor $t(\bar{x}-\bar{x}_{0})$ is not principal then there is no rational point $x\in X_{1}(p)^{(d)}(\QQ)$ in the residue class of $\bar{x}$. 
\end{lem}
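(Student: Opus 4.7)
The plan is to prove the contrapositive. I would assume there exists a rational point $x\in X_{1}(p)^{(d)}(\QQ)$ in the residue class of $\bar{x}$, and deduce that $t(\bar{x}-\bar{x_{0}})$ must then be principal.

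First I would lift $\bar{x_{0}}$ to a rational divisor: since $\bar{x_{0}}$ is by hypothesis a sum of images of rational cusps under $red_{2}$, the corresponding sum of the rational cusps themselves is an effective degree-$d$ rational divisor $x_{0}\in X_{1}(p)^{(d)}(\QQ)$ with $red_{2}(x_{0})=\bar{x_{0}}$. The class $[x-x_{0}]$ then lies in $J_{1}(p)(\QQ)$, and the hypothesis $t(J_{1}(p)(\QQ))=\{0\}$ forces $t([x-x_{0}])=0$. Now since $p$ is odd, $X_{1}(p)$ has good reduction at $2$, so the reduction map $red_{2}\colon J_{1}(p)(\QQ)\to J_{1}(p)(\mathds{F}_{2})$ is a well-defined group homomorphism, and moreover $\mathds{T}$ acts on the N\'{e}ron model of $J_{1}(p)$ over $\ZZ_{(2)}$, making $red_{2}$ a morphism of $\mathds{T}$-modules. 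Reducing $t([x-x_{0}])=0$ modulo $2$ therefore yields $t([\bar{x}-\bar{x_{0}}])=0$ in $J_{1}(p)(\mathds{F}_{2})$, which is exactly the statement that $t(\bar{x}-\bar{x_{0}})$ is principal as a divisor on $X_{1}(p)_{\mathds{F}_{2}}$. This contradicts the hypothesis and completes the proof.

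The only substantive input beyond a routine diagram chase is the $\mathds{T}$-equivariance of $red_{2}$, a standard consequence of the N\'{e}ron model formalism applied to $J_{1}(p)$ at the prime $2$ of good reduction, together with the fact that the Hecke operators are correspondences defined over $\ZZ$. I therefore anticipate no real obstacle; the proof is essentially formal once the canonical lift $x_{0}$ is written down and the compatibility $t\circ red_{2}=red_{2}\circ t$ is invoked.
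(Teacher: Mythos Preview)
Your proposal is correct and follows essentially the same route as the paper: assume a rational lift $x$ of $\bar{x}$, take $x_{0}$ the sum of the corresponding rational cusps lifting $\bar{x_{0}}$, use $t(J_{1}(p)(\QQ))=\{0\}$ to conclude $t(x-x_{0})$ is principal, and then reduce modulo $2$. Your version is slightly more explicit about the $\mathds{T}$-equivariance of $red_{2}$, but the argument is otherwise identical.
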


\begin{proof}
This is \cite[Lemma 8.6]{derickx2021torsion}. 
Let $X=X_1(p)$ and $J=J_{1}(p)$. Suppose there is a rational point $x\in X^{(d)}(\QQ)$ in the residue class of $\bar{x}$. 
Let $x_{0}\in X^{(d)}(\QQ)$ be a point in the residue class of $\bar{x}_{0}$ that is the sum of rational cusps. By assumption, $t$ sends points in $J(\QQ)$ to zero. 
Thus $t(x-x_{0})=0$ i.e. $t(x-x_{0})$ is principal. 
In particular, it follows that $t(\bar{x}-\bar{x}_{0})$ is principal.
\end{proof}

\begin{lem}
    \label{lem:rankzero}
    Let $p=47$, $59$ or $71$. 
    Then $p\not\in S(8)$.
\end{lem}

\begin{proof}
    Let $p=47$, $59$ or $71$.
    It remains to show that assumption (b) of Lemma \ref{MainStrategy} is satisfied. 
    Let $X=X_1(p)$ and $J=J_1(p)$. 
    Let $x$ be a degree $8$ point on $X=X_{1}(p)$ and write $\bar{x}$ for the corresponding divisor on $X_{\FF_{2}}$. 

    First consider $p=47$. 
    Using Stoll's code, we checked that there are no elliptic curves over $\FF_{2^{d}}$ 
    with a point of order $p=47$ for $1\leq d\leq 7$. 
    Thus $\bar{x}$ is the sum of eight rational cusps.
    Recall that assumption (a) of Lemma \ref{MainStrategy} with $\ell=2$ follows 
    directly from Lemma \ref{lem:rankzero8a}. 
    Thus $x$ is the sum of eight rational cusps.
    This contradicts the assumption that $x$ is a degree $8$ point on $X=X_{1}(p)$. 

    Now suppose $p=59$ or $71$. Write $T_n$ for the $n$-th Hecke operator, and $\langle a \rangle$ for the diamond operator. Let $t=(\langle 3 \rangle-1)(T_3-3\langle 3 \rangle -1) \in \mathbb{T}$. 
    We checked using \texttt{Magma} that the positive rank simple factors of $J_{1}(p)$ already occur in $J_{0}(p)$.
    Using Stoll's code, we checked that there are no elliptic curves over $\FF_{2^{d}}$ with a point of order $p$ for $1\leq d\leq 6$. 
    As before $\bar{x}$ can not be the sum of eight rational cusps. The only remaining possibility is that $\bar{x}=\bar{D}+\bar{y}$ where $\bar{D}$ is a degree $7$ place on $X_{\F_2}$, and $\bar{y}$ is the reduction of a rational cusp. We note that $t(\bar{y})$ is principal, and $t(\bar{x})$ is principal. Hence $t(\bar{D})$ must be principal. Using Stoll's code we checked that for all degree $7$ places $\bar{D}$ of $X_{\F_2}$, the divisor $t(\bar{D})$ is not principal, giving a contradiction in this case. 
All supporting computations can be found in the script \texttt{rankzeroprimes.m}.
\end{proof}

\section{Verifying assumption (a)}\label{sec:assumptiona}

Let $\ell$ and $p$ be distinct primes. 
Suppose $\bar{x}\in X_{1}(p)^{(d)}(\mathds{F}_{\ell})$. 
We recall assumption (a) of Lemma \ref{MainStrategy}: if $\bar{x}$ is the sum of images of rational cusps under $\redl$ then the residue class of $\bar{x}$ contains at most one rational point. Before stating the main strategy used to verify that this assumption holds, we state an important result that is used in the proof. 


\begin{thm}[Derickx, Kamienny, Stein and Stoll]\label{thm:criterion}
Let $d\geq 1$ be an integer and let $\ell$ and $p$ be distinct primes. Let $t: J_{1}(p)\rightarrow\mathcal{A}$ be a morphism of abelian schemes over $\ZZ_{(l)}$ such that:
\begin{enumerate}
    \item[i)] $t(J_{1}(p)(\QQ))$ is finite;
    \item[ii)] $\ell>2$ or $\#t(J_{1}(p)(\QQ))$ is odd;
    \item[iii)] $t\circ \iota$ is a formal immersion at all $\bar{x}\in X_{1}(p)^{(d)}(\mathds{F}_{\ell})$ that are sums of images of rational cusps on $X_{1}(p)$.
\end{enumerate}
Then assumption (a) of Lemma \ref{MainStrategy} holds.
\end{thm}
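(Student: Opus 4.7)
The implicit conclusion, given the placement of this result in the section on verifying assumption (a), must be that hypotheses (i)--(iii) imply assumption (a) of Lemma \ref{MainStrategy} for $p$ with respect to $l$: for every $\bar{x} \in X_{1}(p)^{(d)}(\mathds{F}_{l})$ that is a sum of images of rational cusps, the residue class of $\bar{x}$ contains at most one rational point. The plan is to reduce the coincidence of two rational lifts of $\bar{x}$ to the vanishing of a torsion element of $\mathcal{A}(\ZZ_{(l)})$, and then to invoke the formal immersion hypothesis to promote equality of images into equality of the lifts themselves.

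Concretely, I would take $x_{1}, x_{2} \in X_{1}(p)^{(d)}(\QQ)$ both reducing to $\bar{x}$, and use properness of $X_{1}(p)^{(d)}$ over $\ZZ_{(l)}$ to extend them to sections $\sigma_{1}, \sigma_{2} \in X_{1}(p)^{(d)}(\ZZ_{(l)})$, each having special fibre $\bar{x}$. Setting $D \coloneqq t(\iota(\sigma_{1})) - t(\iota(\sigma_{2})) \in \mathcal{A}(\ZZ_{(l)})$, hypothesis (i) forces the generic fibre of $D$ to lie in the finite group $t(J_{1}(p)(\QQ))$, so $D$ is torsion of some order $n$ dividing $N \coloneqq \# t(J_{1}(p)(\QQ))$. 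Since $\sigma_{1}$ and $\sigma_{2}$ share the same reduction mod $l$, the torsion element $D$ reduces to $0$ in $\mathcal{A}(\mathds{F}_{l})$, so Theorem \ref{thm:abelianvarinj} forces the prime-to-$l$ part of $D$ to vanish. Hypothesis (ii) then closes out the analysis of $D$: if $l = 2$ and $N$ is odd, then $n \mid N$ is simultaneously odd and a power of $2$, hence $n = 1$ and $D = 0$ directly; the case $l > 2$ requires a more delicate use of the formal immersion condition (iii) at $\bar{x}$ to absorb any remaining $l$-power torsion contribution via the formal group of $\mathcal{A}$ along the kernel of reduction.

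Once $D = 0$, we have $t(\iota(\sigma_{1})) = t(\iota(\sigma_{2}))$ in $\mathcal{A}(\ZZ_{(l)})$, and the defining property of a formal immersion at $\bar{x}$ then forces $\sigma_{1} = \sigma_{2}$, whence $x_{1} = x_{2}$. The main obstacle I anticipate is the $l > 2$ branch: cleanly eliminating the possibility that $D$ is a non-zero $l$-power torsion element of the kernel of reduction on $\mathcal{A}(\ZZ_{(l)})$, without inadvertently strengthening the hypothesis to $l \nmid N$. This likely entails base-changing to $\ZZ_{l}$ and exploiting the fact that the formal group of $\mathcal{A}$ interacts compatibly with $t \circ \iota$ along $\bar{x}$ precisely because of the formal immersion hypothesis, so that any such $l$-power torsion ambiguity would directly contradict that hypothesis. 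The remaining steps --- lifting via properness, applying Theorem \ref{thm:abelianvarinj} to kill the prime-to-$l$ part of $D$, and invoking (ii) in the $l = 2$ branch --- are essentially formal once this core issue is settled.
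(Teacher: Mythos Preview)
Your overall strategy coincides with the paper's: take two lifts $x_1,x_2$ of $\bar{x}$, form the difference $D = (t\circ\iota)(x_1) - (t\circ\iota)(x_2)$, argue $D = 0$ from injectivity of reduction on the finite group $t(J_1(p)(\QQ))$, and then invoke the formal immersion hypothesis to conclude $x_1 = x_2$. The paper packages the first part simply as ``$red_l : t(J_1(p)(\QQ)) \to \mathcal{A}(\mathds{F}_l)$ is injective'' and then cites \cite[Lemma 4.2]{derickx2021torsion} for the second.

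Where you go astray is in the $l > 2$ branch. You propose that eliminating a residual $l$-power torsion component of $D$ requires hypothesis (iii) and some interaction of $t\circ\iota$ with the formal group of $\mathcal{A}$. It does not. Over $\ZZ_{(l)}$ (or $\ZZ_l$), the kernel of reduction $\mathcal{A}(\ZZ_l) \to \mathcal{A}(\mathds{F}_l)$ is identified with the points of the formal group of $\mathcal{A}$ on $l\ZZ_l$, and for $l > 2$ this group is torsion-free: the ramification index is $1 < l-1$, so the formal logarithm gives an isomorphism with a free $\ZZ_l$-module. Hence once $D$ is torsion and reduces to $0$, it is automatically $0$ whenever $l > 2$, with no appeal to (iii). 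Hypothesis (ii) is precisely the dichotomy that makes reduction injective on $t(J_1(p)(\QQ))$ in both cases. The formal immersion condition (iii) enters only at the final step --- passing from $(t\circ\iota)(x_1) = (t\circ\iota)(x_2)$ back to $x_1 = x_2$ --- and is entirely decoupled from the torsion analysis; conflating ``formal immersion'' with ``formal group'' is the source of the confusion you flag as the main obstacle.
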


\begin{proof}
    This is \cite[Corollary 4.3]{derickx2021torsion}.
    Suppose $x,x^\prime\in X_{1}(p)^{(d)}(\QQ)$ are in the residue class of $\bar{x}$, where $\bar{x}$ is the sum of images of rational cusps. We want to show that $x=x^\prime$. 
    Let $y=(t\circ\iota)(x)$ and $y^\prime=(t\circ\iota)(x^\prime)$. We note that $y$ and $y^\prime$ lie in the same residue class, since $x$ and $x^\prime$ do. 
    It then follows from i) and ii) that $y=y^\prime$. Under the hypothesis of assumption iii), a 
    result of Parent \cite[Lemma 4.13]{Parent99} 
    asserts that the map
    \begin{equation*}
        t\circ\iota: \redl^{-1}(\bar{x})\rightarrow \redl^{-1}((t\circ \iota)(\bar{x}))
    \end{equation*}
    is an injection. 
    Thus $x=x^\prime$.
\end{proof}

We work with $\ell=2$, and refer the reader to 
\cite[Section 5]{derickx2021torsion} for the construction of an appropriate operator $t\in \mathbb{T}$. 
Indeed Derickx, Kamienny, Stein and Stoll \cite[Corollary 5.2]{derickx2021torsion} prove that such a $t$ satisfies assumptions i) and ii) of Theorem \ref{thm:criterion} with $\ell=2$, and one can apply Kamienny's criterion \cite[Proposition 5.3]{derickx2021torsion} to verify assumption iii).
We used Stoll's code to test whether Kamienny's criterion holds, and found that it holds for 
\begin{equation}
    \label{eq:kam}
        137< p < 6724,\quad p\neq 149, 157, 163, 193, 431.
\end{equation}
This part of the verification required several parallel computations. 
The supporting code can be found in script \texttt{assumptiona.m}. 

\medskip

For the primes excluded by \eqref{eq:kam} 
we used Stoll's code to verify that assumption (a) holds by replacing 
$X_{1}(p)$ with an intermediate curve between $X_{1}(p)$ and $X_{0}(p)$, see \cite[Corollary 4.4]{derickx2021torsion}; 
this criterion is similar to Theorem \ref{thm:criterion}.
The supporting code can be found in the script \texttt{remainingprimes.m}.

\section{Verifying assumption (b)}\label{sec:assumptionb}
Let $p$ be a prime, and let $\ell$ be a prime distinct from $p$. Suppose $\bar{x}\in X_{1}(p)^{(d)}(\mathds{F}_{\ell})$. We recall assumption (b) of Lemma \ref{MainStrategy}: 
if $\bar{x}$ isn't the sum of images of rational cusps under $\redl$
then the residue class of $\bar{x}$ doesn't contain a rational point.
As remarked in \cite[pg. 272]{derickx2021torsion}, to verify this assumption, it suffices to show that
\begin{enumerate}
    \item[i)] there is no elliptic curve $E$ over $\mathds{F}_{\ell^{d^\prime}}$, for all $d^\prime\leq d$, such that $p\mid \#E(\mathds{F}_{\ell^{d^\prime}})$;
    \item[ii)] $p\nmid \ell^{d^\prime}\pm 1$ for all $d^\prime\leq d$.
\end{enumerate}

It immediately follows 
from a result of Waterhouse \cite[Theorem 4.1]{Waterhouse} that 
\[
\#E(\mathds{F}_{2^d})\in \{r\in [\lceil(2^{d/2}-1)^2\rceil, \lfloor(2^{d/2}+1)^2\rfloor]\; : \; r \text{ is even}\} \cup \{r_{d}\}
\]
where 
\[
    r_{d}=
    \begin{cases}
        2^d+m2^{d/2}+1,\; m\in\{-2,-1,0,1,2\} & \text{if $d$ is even}\\
        2^d+m2^{(d+1)/2}+1,\; m\in\{-1,0,1\} & \text{if $d$ is odd}.
    \end{cases}
\]
Thus it remains to 
show that assumption (b) holds for
\begin{align*}
    p\in
    \{\,
    & 29,31,37,41,43,47,59,61,67,71,\\
    & 73,113,127,131,137,139,241,257\}.
\end{align*}

By Lemma \ref{lem:29notinS8} and Lemma \ref{lem:rankzero}, we have $29, 31, 41, 47, 59, 71\not\in S(8)$. Thus, it remains to verify that assumption (b) holds for 
\begin{equation*}
    p\in \{37,43,61,67,73,113,127,131,137,139,241,257\}.
\end{equation*}

We fix some notation for the remainder 
of the paper.
Let $X=X_1(p)$ and $J=J_1(p)$. Write $T_n$ for the 
$n$-th Hecke operator, and $\langle a \rangle$ for the diamond operator.
Let $t=(\langle 3 \rangle-1)(T_3-3\langle 3 \rangle -1) \in \mathbb{T}$. 
As shown in \cite[pg. 303]{derickx2021torsion}, the operator $T_3-3\langle 3 \rangle -1$ kills 
rational torsion.
For the relevant primes $p$, we verify that 
the operator $\langle 3\rangle -1 $ maps $J$ into an abelian subvariety 
of rank zero. 
Thus for such $p$ the operator $t$ kills $J(\QQ)$.

\begin{lem}
    \label{lem:smallprimes}
        Let $p\in \{43, 61, 67, 73\}$. Then $p\not\in S(8)$. 
\end{lem}

\begin{proof}
Suppose $x$ is a degree $8$ point on $X=X_{1}(p)$ and denote by $\bar{x}$ the corresponding divisor on $X_{\FF_{2}}$. 
If $p=61, 67$ or $73$ then the operator $\langle 3\rangle -1 $ maps $J=J_{1}(p)$ into an abelian subvariety 
of rank zero by the proof of \cite[Lemma 8.7]{derickx2021torsion}. 
If $p=43$, we checked using \texttt{Magma} that that the positive-rank simple factors of $J_1(43)$ all occur in $J_0(43)$.

    First suppose $p=43$, $61$ or $67$. 
    Using Stoll's code, we checked that there are no elliptic curves over $\FF_{2^{d}}$ with a point of order $p$ for $1\leq d\leq 6$. Thus all places on $X_{\FF_{2}}$ of those degrees $d$ must be cuspidal. 
    We proved that assumption (a) of Lemma \ref{MainStrategy} holds in Section \ref{sec:assumptiona}. 
    Then $\bar{x}=\bar{D}+\bar{y}$ where $\bar{D}$ is a degree $7$ place on $X_{\FF_{2}}$, and $\bar{y}$ is the reduction of a rational cusp. 
    Since $t(\bar{x})$ and $t(\bar{y})$ are principal, it must be that $t(\bar{D})$ is principal. Using Stoll's code, we checked that $t(\bar{D})$ is not principal for all degree $7$ places $\bar{D}$ on $X_{\FF_{2}}$. This gives a contradiction. 

Now suppose $p=73$. 
Using Stoll's code, we checked that there are no elliptic curves over $\FF_{2^{d}}$ with a point of order $p$ for $1\leq d\leq 5$. 
In each possible case, the support of $\bar{x}$ must contain a degree $d$ place $\bar{D}$ such that $t(\bar{D})$ is principal where $d=6, 7$ or $8$. 
Again, we checked that $t(\bar{D})$ is not principal for all degree $d$ places $\bar{D}$ on $X_{\FF_{2}}$.
The supporting computations can be found in the script \texttt{smallprimes.m}. 
\end{proof}

\begin{lem}\label{lem:largeprimes}
    Let $p\in\{113, 127, 131, 137, 139, 241, 257\}$. Then $p\not\in S(8)$.
\end{lem}

\begin{proof}
    Let $p$ be a prime as above. 
    We follow the proof of \cite[Corollary 7.2]{derickx2021torsion}.
    We checked using \texttt{Magma} that the positive rank simple factors of $J_{1}(p)$ already occur in $J_{0}(p)$.
    In order to apply \cite[Proposition 7.1]{derickx2021torsion}, it suffices to find a primitive root $a$ 
    modulo $p$.
    For $p\neq 131, 241$ we choose $a=3$; for 
    $p=131$ we choose $a=2$; for $p=241$ we choose $a=7$. 
    Let $\ord(a)$ denote the order of $a$ in $(\ZZ/p\ZZ)^{\times}/\{\pm 1\}$. 
    For each $p$ we have $\ord(a)=(p-1)/2>3\cdot 8$.
    We let $n=7$ if $p\neq 131, 241$ and $n=8$ otherwise.
    In all cases we check that the inequality
    \begin{equation*}
        8 < \frac{325}{2^{16}}\cdot\frac{p^2-1}{n}
    \end{equation*}
    holds. 
    Thus \cite[Proposition 7.1]{derickx2021torsion} asserts that $p\not\in S(8)$. 
    The supporting \texttt{Magma} computations 
    can be found in the script \texttt{largeprimes.m}.
\end{proof}

In order to complete the proof of Theorem \ref{thm:S8}, it remains to show that assumption (b) of Lemma \ref{MainStrategy} holds for $p=37$. 

\begin{lem}
    $37\not\in S(8)$.
\end{lem}

\begin{proof}
We follow closely the proofs of Lemmas 8.8 and 8.9 of \cite{derickx2021torsion}.
We work with $\ell=2$. Using Stoll's code, we checked that there are no elliptic curves over $\F_{2^d}$ with a point of order $37$ for $d=1$, $2$, $3$, $4$, $5$, $8$. Thus all places on $X_{\F_2}$ of those degrees $d$ must be cuspidal. The curve $X=X_1(37)$ has $18$ rational cusps, and $18$ irrational cusps; the latter are defined over $\QQ(\zeta_{37})^+$. As $2$ is inert in $\QQ(\zeta_{37})^+$, the irrational cusps yield a single place on $X_{\F_2}$ of degree $18$.  Let $x$ be a degree $8$ point on $X=X_1(37)$, and write $\bar{x}$ for the corresponding divisor on $X_{\F_2}$. There are only three possible cases.

\medskip

\noindent \textbf{Case (I).} $\bar{x}$ is the sum of eight rational cusps. We proved that assumption (a) of Lemma \ref{MainStrategy} holds in Section \ref{sec:assumptiona}. Thus $x$ is the sum of eight rational cusps giving a contradiction.

\medskip

\noindent \textbf{Case (II).} $\bar{x}=\bar{D}+\bar{y}$ where $\bar{D}$ is a degree $7$ place on $X_{\F_2}$, and $\bar{y}$ is the reduction of a rational cusp. We note that $t(\bar{y})$ is principal, and $t(\bar{x})$ is principal. Hence $t(\bar{D})$ must be principal. Using Stoll's code we checked that for all degree $7$ places $\bar{D}$ of $X_{\F_2}$, the divisor $t(\bar{D})$ is not principal, giving a contradiction in this case.

\medskip

\noindent \textbf{Case (III).} $\bar{x}=\bar{D}+\bar{y}+\bar{z}$ where $\bar{D}$ is a place of degree $6$ on $X_{\F_2}$ and $\bar{y}$, $\bar{z}$ are reductions of rational cusps, that may be distinct or equal. Again $t(\bar{x})$, $t(\bar{y})$ and $t(z)$ are principal, therefore $t(\bar{D})$ must be principal. We checked using the same code that there is precisely one degree $6$ place on $X_{\F_2}$ (up to the action of the diamond operators) such that $t(\bar{D})$ is principal. As noted in \cite{derickx2021torsion}, the divisor $\bar{D}$ is the reduction of a degree $6$ point $D$ on $X$. To obtain a contradiction, it is enough to show that $D+y+z$
is the unique rational point on $X^{(8)}$ in the residue disk of $\bar{D}+\bar{y}+\bar{z}$.

Continuing in the footsteps of \cite[Lemma 8.8]{derickx2021torsion} we consider the projection $T_{17} : J \rightarrow A$, where $A$ is a $36$ dimensional abelian variety of rank $0$, rational torsion subgroup of odd order; moreover the eigenvalues of $T_{17}$ acting on the eigenforms coming from $A$ are all odd. To show that $D+y+z$ is the unique rational point in the residue disk of $\bar{D}+\bar{y}+\bar{z}$ it is enough to verify that the relevant \lq Derickx matrix\rq\ (see \cite[Proposition 3.7]{derickxthesis}) has rank $8$. Using a basis for $S_2(\Gamma_1(37))$, which has dimension $40$, Stoll's 
code constructs a canonical embedding for $X \subset \mathbb{P}^{39}$. Thus regular differentials on $X$ may be identified with linear combinations of the coordinates on $\mathbb{P}^{39}$. With this identification, Stoll's code determines the differentials $\omega_1,\dotsc,\omega_{36}$ coming from the rank zero quotient $A$. If the two cusps $y$, $z$ are distinct, then the divisor $\bar{D}+\bar{y}+\bar{z}$ is the sum of eight geometric points, say $\bar{D}+\bar{y}+\bar{z}=\bar{p}_1+\cdots+\bar{p}_8$. In this case, the Derickx matrix has a particularly simple form, $M=(\omega_i(p_j))$, and the formal immersion criterion is satisfied if this matrix has rank $8$ (see \cite[Proposition 3.7]{derickxthesis}). We do not know the degree $6$ place $\bar{D}$ on this particular model, but we checked, for all distinct pairs of rational cusps $y$, $z$, and all degree $6$ places $\bar{D^\prime}$ on $X_{\F_2}$ that the matrix $M$ for $\bar{D^\prime}+\bar{y}+\bar{z}$ has rank $8$ as required.

It remains to consider the case where $y=z$. Note that the action of the diamond operators on the rational cusps is transitive, and one of these rational cusps is the $\infty$ cusp. Thus it is enough to show that the Derickx matrix for $\bar{D^\prime}+2 \infty$ has rank $8$ for all degree $6$ places on $X_{\F_2}$. Write $\bar{D}^\prime=p_1+\cdots+p_6$ where the $p_i$ are geometric points. Then the Derickx matrix is 
\[
M= \begin{pmatrix}
\omega_1(p_1) & \omega_1(p_2) & \cdots & \omega_1(p_6) & a_1(\omega_1) & a_2(\omega_1) \\
\vdots & \vdots & & \vdots & \vdots & \vdots\\
\omega_{36}(p_1) & \omega_{36}(p_2) & \cdots & \omega_{36}(p_6) & a_1(\omega_{36}) & a_2(\omega_{36}) \\
\end{pmatrix};
\]
here $a_1(\omega)$ and $a_2(\omega)$ are the first two coefficients in the expansion of $\omega$ in terms of any uniformizer at $\infty$. Our differentials $\omega_1,\dotsc,\omega_{36}$ come from cusp expansions around $\infty$, with the cusp expansion $f=a_1 q + a_2 q^2+\cdots$ giving the differential $\omega=f(q) dq/q=(a_1+a_2 q+\cdots) dq$. As $q$ is a uniformizer for the $\infty$-cusp we may use these coefficients $a_1$, $a_2$ in the Derickx matrix. We computed all the possible matrices $M$ and checked that they indeed have rank $8$. This completes the proof. 
The supporting computations can be found in the script \texttt{X137.m}.
\end{proof}

\section*{Data availability statement}

All supporting \texttt{Magma} computations 
can be found in the following public GitHub repository: \url{https://github.com/MaleehaKhawaja/deg8torsionprimes}.

\section*{Conflict of interest statement}
The author has no conflicts of interest to declare that are relevant to the content of this article.

\bibliographystyle{abbrv}
\bibliography{torsion}

\end{document}